\newtheorem{theorem}{Theorem}[section]
\newtheorem{lemma}[theorem]{Lemma}
\newcommand{\hpg}[5]{{}_{#1}\mbox{\rm F}_{\!#2}\! \left(\left.{#3 \atop #4}\right| #5 \right)}
\begin{document}

\title{More hypergeometric identities related to Ramanujan-type series}
\author{Jes\'{u}s Guillera}
\address{Zaragoza, (Spain)}
\email{jguillera@gmail.com}
\date{}

\dedicatory{Dedicated to Jonathan Borwein and Doron Zeilberger \\
on the occasion of their $61^{st}$ and $62^{nd}$ birthday respectively}

\begin{abstract}
We  find new hypergeometric identities which, in a certain aspect, are stron-ger than others of the same style found by the author in a previous paper. The identities in Section \ref{section-pi} are related to some Ramanujan-type series for $1/\pi$. We derive them by using WZ-pairs associated to some interesting formulas by Wenchang Chu. The identities we prove in Section \ref{section-pi2} are of the same style but related to Ramanujan-like series for $1/\pi^2$.
\end{abstract}

\maketitle

\section{Introduction}\label{introduc}

\subsection{Ramanujan series}

In 1914 S. Ramanujan gave $17$ series for $1/\pi$ which are of the following form:
\begin{equation}\label{rama-series}
\sum_{n=0}^{\infty} z^n \frac{ \left( \frac{1}{2} \right)_n (s)_n (1-s)_n}{ (1)_n^3} (a+bn)= \frac{1}{\pi}, \qquad (x)_k=\frac{\Gamma(x+k)}{\Gamma(x)},
\end{equation}
where $s \in \{1/2,1/4,1/3,1/6 \}$ and the parameters $z$, $a$, $b$ are algebraic numbers.
One example is
\begin{equation}\label{rama42}
\sum_{n=0}^{\infty} \frac{1}{2^{6n}} \; \frac{\left( \frac12 \right)_n^3}{(1)_n^3} \; (42n+5)=\frac{16}{\pi}.
\end{equation}
See Ramanujan's paper \cite{Ra} and \cite[pp. 352-354]{Be}. In $1987$ the Borwein brothers gave rigorous proofs of the $17$ Ramanujan series \cite{BoBo}.
In honor of Ramanujan, series of the form (\ref{rama-series}) are now known as Ramanujan-type series for $1/\pi$. An excellent survey of such results is provided in \cite{BaBeCh}, and in \cite{BeBoBo} there are many good articles concerned with the number $\pi$, some of which are related to Ramanujan-type series.

\subsection{Ramanujan series extended}

We consider the following extensions with the variable $x$ of the Ramanujan-type series:
\begin{equation}\label{rama-pos-ext}
\sum_{n=0}^{\infty} z^{n+x} \frac{ \left( \frac12 \right)_{n+x} (s)_{n+x} (1-s)_{n+x} }{ (1)_{n+x}^3 } (a+b(n+x)),
\end{equation}
and
\begin{equation}\label{rama-alt-ext}
\sum_{n=0}^{\infty} (-1)^n z^{n+x} \frac{ \left( \frac12 \right)_{n+x} (s)_{n+x} (1-s)_{n+x}}{ (1)_{n+x}^3} (a+b(n+x)),
\end{equation}
for $0<z<1$. In \cite[Sect. 2]{Gu-hyperiden}, we used the WZ-method \cite{Pe} to derive many hypergeometric identities which can be written in the form
\begin{equation}\label{form1-pos}
\sum_{n=0}^{\infty} g(n+x)=t(x)+x^2 \sum_{n=0}^{\infty} f(x,n)
\end{equation}
or in the form
\begin{equation}\label{form1-neg}
\sum_{n=0}^{\infty} (-1)^n g(n+x)=t(x)+x^2 \sum_{n=0}^{\infty} f(x,n),
\end{equation}
where $g(n)$ is hypergeometric, $f(k,n)$ is hypergeometric in both variables, $t(x)$ is a simple trigonometric function such that $t(0)=1/\pi$, and
$\sum_{n=0}^{\infty} g(n)=t(0)$ is a Ramanujan-type series for $1/\pi$.
From (\ref{form1-pos}) or (\ref{form1-neg}), we can calculate the coefficient of $x^2$ by observing that
\[ \sum_{n=0}^{\infty} f(x,n)=\sum_{n=0}^{\infty} f(0,n)+\mathcal{O}(x). \]
In Section \ref{section-pi} of this paper, we use WZ-pairs to find stronger versions of identities (\ref{form1-pos}) and (\ref{form1-neg}). They are of the form
\begin{equation}\label{form2-pos}
\sum_{n=0}^{\infty} g(n+x)=t(x)+x^3 \sum_{n=0}^{\infty} f(x,n),
\end{equation}
and
\begin{equation}\label{form2-alt}
\sum_{n=0}^{\infty} (-1)^n g(n+x)=t(x)+x^3 \sum_{n=0}^{\infty} f(x,n).
\end{equation}
Hence $t(x)$ easily gives the first two terms of the series expansions of the corresponding extended series \cite{Gu-matrix} of the form
\[ \sum_{n=0}^{\infty} g(n+x) \quad \text{and} \quad \sum_{n=0}^{\infty} (-1)^n g(n+x). \]
In addition, we can obtain the coefficient of $x^3$ by observing that
\[ \sum_{n=0}^{\infty} f(x,n)=\sum_{n=0}^{\infty} f(0,n)+\mathcal{O}(x). \]
This provides a substantial improvement to our results from \cite{Gu-hyperiden}. As an example, compare the old identity
\begin{multline}\label{old}
\frac{1}{16} \sum_{n=0}^{\infty} \frac{1}{2^{6(n+x)}} \frac{ \left(\frac{1}{2}\right)_{n+x}^3}{(1)_{n+x}^3}(42(n+x)+5) \\ =
\frac{1}{\pi} \frac{1}{\cos^2\pi x} + \frac{1}{64^x} \frac{\left( \frac12 \right)_x^3}{(1)_x^3} \frac{8 x^2}{2x-1} \sum_{n=0}^{\infty}  \frac{\left(\frac12+x\right)_n^2}{(1+2x)_n \left( \frac32 - x \right)_n},
\end{multline}
with our new identity
\begin{multline}\label{new}
\frac{1}{16} \sum_{n=0}^{\infty} \frac{1}{2^{6(n+x)}}
\frac{\left(\frac{1}{2}\right)_{n+x}^3}{(1)_{n+x}^3}[42(n+x)+5] \\
=\frac{1}{\pi} \frac{\cos 3 \pi x}{\cos^3 \pi x} + \frac{1}{64^x} \frac{\left(\frac{1}{2} \right)_x^3} {(1)_x^3}
\frac{64x^3}{(1-2x)^3} \sum_{n=0}^{\infty} \frac{\left( \frac{1}{2}+x \right)_n^3}{\left( \frac{3}{2}-x \right)_n^3} (-1)^n(2n+1).
\end{multline}
Equation (\ref{old}) implies the expansion
\[
\frac{1}{16} \sum_{n=0}^{\infty} \frac{1}{2^{6(n+x)}} \frac{ \left(\frac{1}{2}\right)_{n+x}^3}{(1)_{n+x}^3}(42(n+x)+5)=
\frac{1}{\pi}-3 \pi x^2 + \mathcal{O}(x^3),
\]
but  does not easily yield the coefficient of $x^3$. However, from (\ref{new}) we easily deduce
\[
\frac{1}{16} \sum_{n=0}^{\infty} \frac{1}{2^{6(n+x)}} \frac{ \left(\frac{1}{2}\right)_{n+x}^3}{(1)_{n+x}^3}(42(n+x)+5)=
\frac{1}{\pi}-3 \pi x^2 + 64 G x^3 + \mathcal{O}(x^4),
\]
where $G$ is Catalan's constant. In Section \ref{section-pi2} we obtain analogous hypergeometric identities related to the two infinite series:
\begin{equation}\label{pi2-1}
\sum_{n=0}^{\infty}  \frac{\left( \frac12 \right)_n^3 \left( \frac14 \right)_n
\left( \frac34 \right)_n}{(1)_n^5} \frac{1}{2^{4n}} (120n^2+34n+3) =\frac{32}{\pi^2},
\end{equation}
and
\begin{equation}\label{pi2-2}
\sum_{n=0}^{\infty}  \frac{\left(\frac12 \right)_n^3 \left( \frac13 \right)_n \left( \frac23 \right)_n}{(1)_n^5} \left( \frac34 \right)^{3n} (74n^2+27n+3)=\frac{48}{\pi^2}.
\end{equation}
Identities (\ref{pi2-1}) and (\ref{pi2-2}), and two additional cases, were discovered and proved by the author via the Wilf-Zeilberger (WZ) method in 2002, 2003 and 2010 (see \cite{guilleraAAMwz}, \cite{guilleraRJgen} and \cite{Gu-wznew}). The author also conjectured six moreover formulas for $1/\pi^2$ (see \cite{Gu-newkind} and \cite{Gu6}). One additional case was conjectured by Gert Almkvist and the author in \cite{AlGu}. Following the publication of \cite{guilleraAAMwz}, Boris Gourevitch discovered a similar series for $1/\pi^3$, using experimental integer relation algorithms (see \cite{Gu-newkind}), and most recently Jim Cullen gave a formula for $1/\pi^4$ (see \cite{Zu}).

\subsection{The WZ-method}

A function $A(n,k)$ is hypergeometric in two variables if the quotients
\[
\frac{A(n+1,k)}{A(n,k)} \quad {\rm and} \quad \frac{A(n,k+1)}{A(n,k)}
\]
are rational functions in $n$ and $k$, respectively. A pair of hypergeometric functions, $F(n,k)$ and $G(n,k)$, is said to be a WZ pair \cite[Chapt. 7]{Pe} if
\begin{equation}\label{pro-WZ-pair}
F(n+1,k)-F(n,k)=G(n,k+1)-G(n,k).
\end{equation}
Wilf and Zeilberger \cite{wilf} proved that in this case, there exists a rational function $C(n,k)$ such that
\begin{equation}\label{certificado}
G(n,k)=C(n,k)F(n,k).
\end{equation}
The rational function $C(n,k)$ is called the certificate of the pair $(F,G)$. To discover WZ-pairs, we use Zeilberger's Maple package EKHAD \cite[Appendix A]{Pe}. If EKHAD certifies a function, we have found a WZ-pair! The usual approach is to sum (\ref{pro-WZ-pair}) for $n \geq 0$, to get
\begin{equation}\label{sumas-wz}
\sum_{n=0}^{\infty} G(n,k) - \sum_{n=0}^{\infty} G(n,k+1) = -F(0,k) + \lim_{n \to \infty} F(n,k).
\end{equation}
In this paper we use an alternative method explained in the next section.

\section{A strategy based on the WZ-method}

The following two theorems establish the main property of the function $t(x)$:
\begin{theorem}\label{teor1}
Let $F(n,k)$ and $G(n,k)$ be a WZ-pair \cite{Pe}. If $G(n,0)$ and $G(k,n)$ vanish as $n \to \infty$,
then the function $t(x)$ defined by
\begin{equation}
\sum_{n=0}^{\infty} G(n+x,0)-\sum_{n=0}^{\infty} F(x,n)=t(x)
\end{equation}
satisfies $t(x+1)=t(x)$, for all complex numbers $x$.
\end{theorem}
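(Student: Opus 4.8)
The plan is to show directly that $t(x+1)-t(x)=0$, by computing each of the two series in the definition of $t$ under the shift $x\mapsto x+1$ and exploiting the telescoping built into the WZ relation. First I would write
\[ t(x+1)-t(x) = \sum_{n=0}^{\infty}\left[ G(n+x+1,0)-G(n+x,0)\right] - \sum_{n=0}^{\infty}\left[ F(x+1,n)-F(x,n)\right]. \]
For the first sum I observe that it telescopes in $n$: setting $a_n=G(n+x,0)$, the summand is $a_{n+1}-a_n$, so the partial sums collapse to $a_{N+1}-a_0$. The first decay hypothesis, $G(n,0)\to 0$ as $n\to\infty$, gives $a_{N+1}\to 0$, whence the first sum equals $-a_0=-G(x,0)$.

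For the second sum I would invoke the defining WZ-pair property \eqref{pro-WZ-pair}, applied with first variable $x$ and second variable $n$, to obtain
\[ F(x+1,n)-F(x,n)=G(x,n+1)-G(x,n). \]
This sum is therefore also telescoping, now with $b_n=G(x,n)$; the partial sums collapse to $b_{N+1}-b_0$, and the second decay hypothesis (namely $G(k,n)\to 0$ as $n\to\infty$, taken at $k=x$) forces the second sum to equal $-G(x,0)$ as well. Subtracting the two evaluations, the contributions $-G(x,0)$ cancel, yielding $t(x+1)-t(x)=0$, which is the assertion. Note that both decay hypotheses are used exactly once, one for each series, and that the WZ relation is precisely what converts the intractable $F$-differences into a telescoping $G$-sum.

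The step I expect to be the main obstacle is not the algebra—the telescoping is immediate once the WZ relation is invoked—but the analytic bookkeeping surrounding the limits. One must verify that both series converge, so that rewriting them in telescoped form and passing to the limit of partial sums is legitimate, and that the two decay conditions on $G$ genuinely apply to the shifted arguments $n+x$ (first variable large) and $x$ (second variable large) for \emph{arbitrary complex} $x$, rather than only for integer shifts where the hypotheses are most naturally read. Granting enough uniformity in the convergence and decay to justify interchanging the limit of the partial sums with the shift, the argument closes, and the claimed $1$-periodicity of $t$ follows for all complex $x$.
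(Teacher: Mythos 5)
Your argument is correct and is essentially the paper's own proof in different packaging: the paper bundles the two telescoping sums into a single auxiliary WZ-pair $(F_2,G_2)$ with $F_2(n,k)=G(n+k,0)-G(k,n)$ and sums the pair relation over the first index, whereas you compute $t(x+1)-t(x)$ directly; in both versions the WZ relation with swapped roles converts the $F$-difference into a telescoping $G$-sum and the two decay hypotheses kill the boundary terms, with your two $-G(x,0)$ contributions cancelling exactly as the paper's $-F_2(0,k)=-G(k,0)+G(k,0)=0$ does.
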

\begin{proof}
Let $F(n,k)$, $G(n,k)$ be a WZ-pair and define
\[ H(n,k)=G(n+k,0), \qquad G_1(n,k)=F(k,n), \qquad  F_1(n,k)=G(k,n). \]
Obviously we have
\[ H(n,k+1)-H(n,k)=H(n+1,k)-H(n,k) \]
and
\[ G_1(n,k+1)-G_1(n,k)=F_1(n+1,k)-F_1(n,k). \]
We now define
\[ F_2(n,k)=H(n,k)-F_1(n,k), \qquad G_2(n,k)=H(n,k)-G_1(n,k). \]
Thus
\[ G_2(n,k+1)-G_2(n,k)=F_2(n+1,k)-F_2(n,k). \]
Summing for $n \geq 0$, we obtain
\begin{align}
\sum_{n=0}^{\infty} \big[G_2(n,k+1)-G_2(n,k)\big] &= -F_2(0,k)+\lim_{n \to \infty} F_2(n,k) \nonumber \\ &=-G(k,0)+G(k,0)+\lim_{n \to \infty} G(n+k,0) - \lim_{n \to \infty} G(k,n) \nonumber \\
&=- \lim_{n \to \infty} G(k,n)=0. \nonumber
\end{align}
Hence, we arrive at the following result which completes the proof:
\[ \sum_{n=0}^{\infty} G_2(n,k)=t(k), \]
where $t(k)$ is a periodic function of period one, that is $t(k+1)=t(k)$.
\end{proof}

\begin{theorem}\label{teor2}
Let $F(n,k)$ and $G(n,k)$ be a WZ-pair involving a factor $(-1)^n$; that is $F(n,k)=(-1)^n \widehat{F}(n,k)$, $G(n,k)=(-1)^n \widehat{G}(n,k)$. If $G(n,0)$ and $G(k,n)$ vanish as $n \to \infty$, then the function $t(x)$ defined by
\begin{equation}
\sum_{n=0}^{\infty} (-1)^n \widehat{G}(n+x,0)-\sum_{n=0}^{\infty} \widehat{F}(x,n)=t(x),
\end{equation}
has the property that $t(x+1)=-t(x)$, for all complex numbers $x$.
\end{theorem}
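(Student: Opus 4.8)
The plan is to mirror, almost verbatim, the construction used in the proof of Theorem \ref{teor1}, reusing the same auxiliary functions built from the WZ-pair and then tracking carefully how the factor $(-1)^n$ redistributes among the arguments. Concretely, I would again set $H(n,k)=G(n+k,0)$, $G_1(n,k)=F(k,n)$ and $F_1(n,k)=G(k,n)$, and then $F_2=H-F_1$, $G_2=H-G_1$. The two companion relations $H(n,k+1)-H(n,k)=H(n+1,k)-H(n,k)$ and $G_1(n,k+1)-G_1(n,k)=F_1(n+1,k)-F_1(n,k)$ rest only on the WZ property of $(F,G)$ and on $H$ being a function of $n+k$ alone; neither is affected by the presence of the sign, so both carry over unchanged, and hence so does $G_2(n,k+1)-G_2(n,k)=F_2(n+1,k)-F_2(n,k)$.

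Next I would sum this telescoping identity over $n\ge 0$. The boundary term is $F_2(0,k)=H(0,k)-F_1(0,k)=G(k,0)-G(k,0)=0$, and the tail is $\lim_{n\to\infty}F_2(n,k)=\lim_{n\to\infty}G(n+k,0)-\lim_{n\to\infty}G(k,n)=0$ by the two vanishing hypotheses, which are stated directly for the alternating $G$. Therefore $T(k):=\sum_{n=0}^{\infty}G_2(n,k)=\sum_{n=0}^{\infty}\bigl[G(n+k,0)-F(k,n)\bigr]$ is independent of $k$.

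The one genuinely new step is to rewrite this constant $T(k)$ in terms of the $t(x)$ of the statement. Writing $G(n+k,0)=(-1)^{n+k}\widehat{G}(n+k,0)$ and $F(k,n)=(-1)^{k}\widehat{F}(k,n)$, the power in the first term splits as $(-1)^{n+k}=(-1)^{k}(-1)^{n}$, whereas the power in the second term is the constant $(-1)^{k}$ throughout the $n$-summation, since it is governed by the first argument $k$ only. Factoring out $(-1)^{k}$ then gives $T(k)=(-1)^{k}\bigl[\sum_{n=0}^{\infty}(-1)^{n}\widehat{G}(n+k,0)-\sum_{n=0}^{\infty}\widehat{F}(k,n)\bigr]=(-1)^{k}t(k)$. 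Because $T$ is constant in $k$, the identity $T(k+1)=T(k)$ forces $(-1)^{k+1}t(k+1)=(-1)^{k}t(k)$, that is $t(k+1)=-t(k)$; the hypergeometric nature of the summands extends this from integer shifts to arbitrary complex $x$.

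I expect the main obstacle to be bookkeeping rather than conceptual: one must keep straight that $(-1)$ is raised to the first argument everywhere, so that $G(n+k,0)$ contributes $(-1)^{n+k}$ while $F(k,n)$ and $G(k,n)$ contribute only $(-1)^{k}$, and one must observe that the residual global factor $(-1)^{k}$ pulled out of the now-constant $T(k)$ is precisely what turns ordinary period-one invariance into the anti-periodicity $t(x+1)=-t(x)$.
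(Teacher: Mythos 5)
Your proof is correct and is in substance the same as the paper's: the paper simply writes $G(n,k)=\cos(\pi n)\,\widehat{G}(n,k)$ and invokes Theorem \ref{teor1}, so that the period-one quantity is $\cos(\pi x)\,t(x)$ and the relation $\cos(\pi(x+1))=-\cos(\pi x)$ yields $t(x+1)=-t(x)$ --- exactly your factorization of $(-1)^{k}$ out of $T(k)$, with the $\cos(\pi\cdot)$ (or $e^{i\pi\cdot}$) interpretation making the sign factor single-valued for complex $x$. The only cosmetic differences are that you re-derive the internals of Theorem \ref{teor1} rather than citing it, and that your phrase ``independent of $k$'' should read ``invariant under $k\mapsto k+1$'', which is all your argument actually uses.
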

\begin{proof}
Just write $G(n,k)=\cos(\pi n) \widehat{G}(n,k)$ and apply the first theorem.
\end{proof}

We also use the following theorem:
\begin{theorem}\label{Fou}
Let $h(z)$ be an entire function which is periodic of period $1$ and let
\[ h(z)=\sum_{n=-\infty}^{+\infty} a_n e^{2 \pi i n z}, \]
be its Fourier series. If $h(z)=\mathcal{O}(e^{c \pi |{\rm Im}(z)|})$ for $|{\rm Im}(z)|$ sufficiently large, where  $c \geq 0$ is a constant, then $c-2|n|<0 \, \Rightarrow a_n=0$.
\end{theorem}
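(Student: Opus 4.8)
The plan is to exploit the integral representation of the Fourier coefficients together with the periodicity and analyticity of $h$, which permit the contour of integration to be translated vertically without altering the value of the integral; estimating the translated integral via the growth hypothesis and then letting the translation tend to $\pm\infty$ forces the offending coefficients to vanish.

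First I would recall that the Fourier coefficients are
\[ a_n = \int_0^1 h(t)\,e^{-2\pi i n t}\,dt. \]
Since $h$ is entire and $1$-periodic, the integrand $\varphi(z)=h(z)\,e^{-2\pi i n z}$ is an entire, $1$-periodic function of $z$. Integrating $\varphi$ around the boundary of the rectangle with vertices $0,\,1,\,1+iy,\,iy$ and applying Cauchy's theorem, the contributions of the two vertical sides cancel because $\varphi(1+is)=\varphi(is)$, so the integral over the top segment equals that over the bottom. Hence, for every real $y$,
\[ a_n = \int_0^1 h(t+iy)\,e^{-2\pi i n(t+iy)}\,dt = e^{2\pi n y}\int_0^1 h(t+iy)\,e^{-2\pi i n t}\,dt. \]
(Equivalently, under the substitution $w=e^{2\pi i z}$ the function $h$ becomes holomorphic on $\mathbb{C}\setminus\{0\}$ with Laurent coefficients exactly the $a_n$, and this level-independence is simply the independence of the Laurent integral on the radius.)

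Next I would insert the growth bound. For $|\mathrm{Im}(z)|$ large we have $|h(z)|\le M\,e^{c\pi|\mathrm{Im}(z)|}$ for some constant $M$. Choosing the sign of $y$ opposite to that of $n$ (the case $n=0$ is vacuous, since $c\ge 0$), I obtain
\[ |a_n| \le e^{2\pi n y}\,M\,e^{c\pi|y|} = M\,e^{\pi(c-2|n|)|y|}. \]
If $c-2|n|<0$, the exponent tends to $-\infty$ as $|y|\to\infty$, so the right-hand side tends to $0$; since the left-hand side does not depend on $y$, I conclude $a_n=0$, which is precisely the assertion.

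The only point requiring genuine care is the level-independence of the coefficient integral, which rests on combining $1$-periodicity (to cancel the vertical sides of the rectangle) with entirety (to apply Cauchy's theorem on a region containing no singularities); once that identity is in hand, the estimate and the limiting argument are immediate. A secondary technicality is that the bound $|h(z)|\le M\,e^{c\pi|\mathrm{Im}(z)|}$ is only assumed for $|\mathrm{Im}(z)|$ sufficiently large, but this is harmless since the conclusion is reached by sending $|y|\to\infty$. The result is a Paley--Wiener-type statement for $1$-periodic entire functions of finite exponential type, and the argument above is the natural adaptation of that circle of ideas.
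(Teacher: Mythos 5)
Your proposal is correct and follows essentially the same route as the paper: shift the coefficient integral to the horizontal line $\mathrm{Im}(z)=y$ (justified by periodicity and Cauchy's theorem), bound it by $M e^{\pi(c-2|n|)|y|}$, and let $|y|\to\infty$ with the appropriate sign. You merely spell out the contour-shift and the estimate that the paper leaves implicit.
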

\begin{proof}
Since $h(z)$ has period $1$, its Fourier coefficients are given by
\[ a_n=\int_{a+ib}^{1+a+ib} h(z) e^{-2 \pi i n z} dz, \]
where $a$ and $b$ are arbitrary real numbers. Then for $n<0$ let $b \to+\infty$ and for $n \geq 0$ let $b \to -\infty$ to conclude $c-2|n|<0 \, \Rightarrow a_n=0$.
\end{proof}

Before finding new identities, we apply our new theorems to complete the proof of an identity conjectured in \cite{Gu-hyperiden}. The same idea also applies to other identities in that paper.

\subsection{Example}\label{ejem}

\begin{multline}
\frac{\sqrt 2}{4} \sum_{n=0}^{\infty} \frac{(-1)^n}{2^{3(n+x)}}
\frac{\left(\frac{1}{2}\right)_{n+x}^3}{(1)_{n+x}^3}[6(n+x)+1] \\
=\frac{1}{\pi} \frac{1}{\cos \pi x} + \frac{4 \sqrt 2}{8^x} \, \frac{x^2}{2x-1} \frac{\left(\frac{1}{2} \right)_x^3} {(1)_x^3} \sum_{n=0}^{\infty}  \frac{1}{2^n} \cdot \frac{\left( x+\frac{1}{2} \right)_n^2} {(x+1)_n \left( \frac{3}{2}-x \right)_n}.
\end{multline}
\begin{proof}
In \cite{Gu-hyperiden} we guessed that the function $s(x)=s_1(x)+s_2(x)$, where
\[
s_1(x)=\frac{\sqrt{2}}{4} \frac{1}{8^x} \frac{\left( \frac12 \right)_x^3}{(1)_x^3} \sum_{n=0}^{\infty} \frac{(-1)^n}{2^{3n}}
\frac{\left(\frac{1}{2}+x\right)_{n}^3}{(1+x)_{n}^3}[6(n+x)+1],
\]
and
\[
s_2(x)=-\frac{4 \sqrt 2}{8^x} \, \frac{x^2}{2x-1} \frac{\left(\frac{1}{2} \right)_x^3} {(1)_x^3} \sum_{n=0}^{\infty}  \frac{1}{2^n} \cdot \frac{\left( x+\frac{1}{2} \right)_n^2} {(x+1)_n \left( \frac{3}{2}-x \right)_n},
\]
was equal to the simple function $t(x)=1/(\pi \cos \pi x)$. Here we prove that it is true. Define the function $h(x)=s(x)\cos \pi x$. We first prove that $h(x)$ has no poles. By Th. \ref{teor2} we see that the function $h(x)$ has the property $h(x+1)=h(x)$, thus we only need to check that $h(x)$ has no poles in the band $0 \leq {\rm Re}(x) < 1$. The only possible pole in this band is at $x=1/2$, but taking the limit of $h(x)$ as $x \to 1/2$, we clearly see that it is not a pole. Hence $h(x)$ is holomorphic and periodic and this implies that it has a Fourier series expansion. By the Weiertrass $M$ test, as $|{\rm Im}(x)| \to \infty$, we have
\[
\sum_{n=0}^{\infty} \frac{(-1)^n}{2^{3n}} \frac{\left(\frac{1}{2}+x\right)_{n}^3}{(1+x)_{n}^3}(6n+1) \to
6 \sum_{n=0}^{\infty} \frac{(-1)^n}{8^n} \frac{\left(\frac12 \right)_{n}^3}{(1)_{n}^3}(6n+1)
\]
and
\[
\sum_{n=0}^{\infty} \frac{(-1)^n}{2^{3n}} \frac{\left(\frac{1}{2}+x\right)_{n}^3}{(1+x)_{n}^3} \to
6 \sum_{n=0}^{\infty} \frac{(-1)^n}{8^n} \frac{\left(\frac12 \right)_{n}^3}{(1)_{n}^3},
\]
which are finite. Hence
\[ \sum_{n=0}^{\infty} \frac{(-1)^n}{2^{3n}} \frac{\left(\frac{1}{2}+x\right)_{n}^3}{(1+x)_{n}^3}[6(n+x)+1]= \mathcal{O}(|{\rm Im}(x)|). \]
On the other hand, as $|{\rm Im}(x)| \to \infty$, we have the behaviour
\[ \frac{\left( \frac12 \right)_x^3}{(1)_x^3}=\mathcal{O}(|{\rm Im}(x)|^{-3/2}). \]
Thus, $|s_1(x)| \to 0$ for $|{\rm Im}(x)| \to \infty$, and in a similar way we see that $|s_2(x)| \to 0$. This implies that $|h(x)|=\mathcal{O}(e^{\pi |{\rm Im}(x)|})$ for $|{\rm Im}(x)|$ sufficiently large. As $h(0)$ is a Ramanujan series with the sum $1/\pi$ \cite[Table 2]{guilleraRJgen}, using Thm. \ref{Fou}, we deduce that
\[ h(x)=\frac{1}{\pi} + 0 \cdot \cos 2 \pi x + 0 \cdot \sin 2 \pi x + 0 \cdot \cos 4 \pi x + \cdots, \]
and we conclude that $s(x)=t(x)$.
\end{proof}
For the last identity in this paper (\ref{idenpi2}), we will explain how we guessed the function $t(x)$ corresponding to it (\ref{funt}).

\section{Identities related to Ramanujan-type series for $1/\pi$}\label{section-pi}

We find new hypergeometric identities using WZ-pairs associated to formulas of Wenchang Chu \cite[Cor. 2.10, 2.33, 2.21, 2.4]{chu}.  In the proofs of Identities 1, 2, 3, 4 the hypothesis
\[ \lim_{n \to \infty} G(k,n) = 0 \]
does not hold. However, with the help of Barnes integral, we can relax this hypothesis and use instead the condition
\begin{equation}\label{thin-contour}
\int_{\mathcal{C}} \widetilde{G}(t,s)ds=0,
\end{equation}
where the $\mathcal{C}$ is a very thin contour and $\widetilde{G}$ is a type of transformation of $G$ (see \cite{Gu-barnes} for details). We state this condition explicitly in the proof of Identity $4$. A related application of the Barnes integral is given in \cite{St-barnes}.

\subsection{Four new identities}

Our proof of Identity $1$ uses the following lemma:

\begin{lemma}\label{lemma1}
The following expansion is true as $x \to 1/2$
\begin{equation}\label{asymp1}
\sum_{n=0}^{\infty} \frac{\left( \frac{1}{2}+x \right)_n^2}{\left( \frac{3}{2}-x \right)_n^2} (-1)^n=
\frac{1}{2}-(\ln 2) (2x-1)+\mathcal{O}(2x-1)^2.
\end{equation}
\end{lemma}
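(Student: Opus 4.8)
The plan is to introduce the small parameter $\epsilon=2x-1$, so that $\tfrac12+x=1+\tfrac{\epsilon}{2}$ and $\tfrac32-x=1-\tfrac{\epsilon}{2}$, and to expand each summand in powers of $\epsilon$. First I would write the ratio of Pochhammer symbols as a finite product,
\[
\frac{\left(\tfrac12+x\right)_n}{\left(\tfrac32-x\right)_n}=\prod_{m=1}^{n}\frac{m+\tfrac{\epsilon}{2}}{m-\tfrac{\epsilon}{2}},
\]
and take logarithms; since $\ln\frac{m+\epsilon/2}{m-\epsilon/2}=\frac{\epsilon}{m}+\mathcal O(\epsilon^3)$, the logarithm of the product equals $\epsilon H_n+\mathcal O(\epsilon^3)$, where $H_n=\sum_{m=1}^n 1/m$. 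Squaring and exponentiating then gives
\[
\frac{\left(\tfrac12+x\right)_n^2}{\left(\tfrac32-x\right)_n^2}=1+2\epsilon H_n+\mathcal O(\epsilon^2),
\]
the crucial point being that the coefficient of $\epsilon$ is exactly $2H_n$.

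Next I would sum the expansion term by term against $(-1)^n$. The constant term yields $\sum_{n=0}^\infty(-1)^n$, Abel summable to $\tfrac12$, while the linear term yields $2\epsilon\sum_{n=0}^\infty(-1)^nH_n$. Using the generating function $\sum_{n\ge1}H_nz^n=-\ln(1-z)/(1-z)$ evaluated (in the Abel sense) at $z=-1$, one gets $\sum_{n=0}^\infty(-1)^nH_n=-\tfrac12\ln2$, so the linear term contributes $-(\ln 2)\,\epsilon=-(\ln 2)(2x-1)$. This reproduces the claimed expansion $\tfrac12-(\ln 2)(2x-1)+\mathcal O\big((2x-1)^2\big)$, and I expect it also suggests the value of the $\mathcal O(\epsilon^2)$ coefficient via $\sum(-1)^n(2H_n^2-H_n^{(2)})$ should one need it.

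The delicate point, and the main obstacle, is that the series converges only for $x<\tfrac12$ (where the terms behave like $n^{2\epsilon}\to0$) and is merely Abel summable at $x=\tfrac12$, so the naive interchange of the $\epsilon$-expansion with the summation is not automatically legitimate: the coefficients $H_n$ grow and the exponential expansion is not uniform in $n$. To make the argument rigorous I would pass to the Euler integral representation valid for $0<a<1$ with $a=\tfrac12+x$, $c=\tfrac32-x$,
\[
S(x)=\frac{1}{B(a,\,c-a)^2}\int_0^1\!\!\int_0^1\frac{(ts)^{a-1}\big[(1-t)(1-s)\big]^{c-a-1}}{1+ts}\,dt\,ds,
\qquad c-a=1-2x,
\]
obtained by writing each Pochhammer ratio as a Beta integral and summing the geometric series $\sum(-ts)^n=(1+ts)^{-1}$. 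Here $B(a,c-a)^{-2}\sim 4(1-a)^2$ as $a\to1^-$, while the double integral diverges at the corner $t=s=1$ at exactly the compensating rate; a Watson-lemma/Laplace analysis of that corner furnishes the expansion with a genuine $\mathcal O\big((2x-1)^2\big)$ error and legitimizes the formal computation above. Alternatively one may observe that $S(x)$ is the well-poised hypergeometric $\hpg{3}{2}{1,\frac12+x,\frac12+x}{\frac32-x,\frac32-x}{-1}$, which is analytic in $x$ throughout the convergence region, and extract its first two Taylor coefficients at $x=\tfrac12$ by differentiating under the summation sign (justified by uniform convergence on compact subsets of $\{x<\tfrac12\}$ together with Abel's theorem), arriving at the same expansion.
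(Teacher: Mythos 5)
Your proposal is correct and follows essentially the same route as the paper: expand the squared Pochhammer ratio at $x=1/2$ to obtain the first-order coefficient $2H_n$, then evaluate the resulting sums via the generating function $\sum_{n\ge 1}H_n z^n=-\ln(1-z)/(1-z)$ in the Abel limit $z\to -1$. The paper gets the coefficient $2H_n$ from the logarithmic-derivative formula for $(x)_n$ rather than from the finite product, and it does not spell out the justification of the interchange that you supply at the end, but these are cosmetic differences.
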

\begin{proof}
We use the formula for the derivative of $(x)_n$ given in \cite[p. 17]{Mo-integrals}:
\[ \frac{d}{dx} (x)_n=(x)_n \sum_{j=0}^{n-1} \frac{1}{j+x}. \]
Applying it to the function
\[ f(x)=\frac{ \left( \frac{1}{2}+x \right)_n }{ \left( \frac{3}{2}-x \right)_n }, \]
we get
\[ f' \left( \frac12 \right)= 2H_n, \]
where
\[ H_n=\sum_{j=1}^{n} \frac{1}{j} \]
is the harmonic number. Hence
\begin{align}
\sum_{n=0}^{\infty} \left[ \frac{\left( \frac{1}{2}+x \right)_n}{\left( \frac{3}{2}-x \right)_n} \right]^2 z^n &=
\sum_{n=0}^{\infty} z^n + 2 \left(\sum_{n=0}^{\infty} H_n z^n \right)
(2x-1)+\mathcal{O}(2x-1)^2 \nonumber \\ &=\frac{1}{1-z}-2\frac{\ln(1-z)}{1-z} (2x-1)+\mathcal{O}(2x-1)^2. \nonumber
\end{align}
Finally we take the limit as $z \to -1$.
\end{proof}

\subsection*{Identity 1}

\begin{multline}\label{iden1}
\frac{1}{4} \sum_{n=0}^{\infty} \frac{1}{2^{2(n+x)}}
\frac{\left(\frac{1}{2}\right)_{n+x}^3}{(1)_{n+x}^3}[6(n+x)+1] \\
=\frac{1}{\pi} \frac{\cos 2 \pi x}{\cos^2 \pi x} + \frac{1}{4^x} \frac{\left(\frac{1}{2} \right)_x^3} {(1)_x^3}
\frac{16 x^3}{(1-2x)^2} \sum_{n=0}^{\infty} \frac{\left( \frac{1}{2}+x \right)_n^2}{\left( \frac{3}{2}-x \right)_n^2} (-1)^n.
\end{multline}
Equation (\ref{iden1}) leads to the expansion
\begin{equation}
\frac{1}{4} \sum_{n=0}^{\infty} \frac{1}{2^{2(n+x)}}
\frac{\left(\frac{1}{2}\right)_{n+x}^3}{(1)_{n+x}^3}[6(n+x)+1]=\frac{1}{\pi}-\pi x^2 + 16 G x^3 + \mathcal{O}(x^4),
\end{equation}
where $G$ is the Catalan's constant.

\begin{proof}
Apply Thm. \ref{teor1}, and the condition (\ref{thin-contour}), to the WZ-pair
\begin{align}
F(n,k)&=\frac { \left( \frac12 - k \right)_n^2 \left( \frac12 + k \right)_n^2}
{(1)_n^3 \left( \frac12 \right)_n} (-1)^k \frac{1}{4^n} \frac{64n^3}{(2n-2k-1)^2}, \nonumber \\
G(n,k)&=\frac { \left( \frac12 - k \right)_n^2 \left( \frac12 + k \right)_n^2}
{(1)_n^3 \left( \frac12 \right)_n} (-1)^k \frac{1}{4^n} \frac{(2n+1)(6n+1)-4k^2}{2n+1}. \nonumber
\end{align}
We see that the function $s(x)=s_1(x)+s_2(x)$, where
\[
s_1(x)=\frac{1}{4} \sum_{n=0}^{\infty} \frac{1}{2^{2(n+x)}} \frac{\left(\frac{1}{2}\right)_{n+x}^3}{(1)_{n+x}^3}[6(n+x)+1],
\]
and
\[
s_2(x)=- \frac{1}{4^x} \frac{\left(\frac{1}{2} \right)_x^3} {(1)_x^3} \frac{16x^3}{(1-2x)^2} \sum_{n=0}^{\infty} \frac{\left( \frac{1}{2}+x \right)_n^2}{\left( \frac{3}{2}-x \right)_n^2} (-1)^n,
\]
has the property $s(x+1)=s(x)$. Then we guess that $s(x)$ is equal to
\[ t(x)=\frac{1}{\pi} \frac{\cos 2 \pi x}{\cos^2 \pi x}. \]
To prove this result, define the function $h(x)=s(x)-t(x)$. We know that $h(0)=0$ and that $h(x+1)=h(x)$. Since $h(x)$ has period $1$, we can prove that it is holomorphic by checking that it has no poles in the band $0 \leq {\rm Re}(x)<1$. The only possible pole in this band is at $x=1/2$, but using (\ref{asymp1}) to evaluate the limit of the function $h(x)$ as $x \to 1/2$, shows it is not a pole. As in the example in Section \ref{ejem}, we see that $s_1(x)$ tends to $0$ as $|{\rm Im}(x)| \to \infty$. On the other hand, (and although we do not have a rigorous proof of it), numerical results clearly show that for $|{\rm Im}(x)|$ sufficiently large, we have $|s_2(x)|=\mathcal{O}(1)$. Then we deduce that $|h(x)|=\mathcal{O}(1)$. Hence, by Thm. \ref{Fou}, the Fourier expansion of $h(x)$ reduces to a constant (observe that as $|h(x)|$ is bounded, also Liouville's Theorem from complex analysis states that $h(x)$ is a constant). Since $h(0)=0$, we see that $h(x)=0$ for all complex $x$.
\end{proof}
\textbf{Remark:} We interpret all the series in this paper in terms of their analytic continuations. For example, we define the series in $s_2(x)$ in the following way:
\[
\sum_{n=0}^{\infty} \frac{\left( \frac12+x \right)_n^2}{\left( \frac32-x \right)_n^2} (-1)^n :=
\hpg32{\frac12+x, \, \frac12+x, \, 1}{\frac32-x, \, \frac32-x}{\, -1}.
\]
In fact, it is not necessary to use the sum at all, but it makes for easier reading.
\par Our proof of Identity $2$ uses the following lemma:
\begin{lemma}\label{lemma2}
The following expansion is true when $x \to 1/2$
\begin{equation}\label{asymp2}
\sum_{n=0}^{\infty} \frac{\left( \frac{1}{2}+x \right)_n^3}{\left( \frac{3}{2}-x \right)_n^3} (-1)^n (2n+1)=
-\frac{3}{2}(2x-1)+\mathcal{O}(2x-1)^3.
\end{equation}
\end{lemma}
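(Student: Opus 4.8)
The plan is to mirror the proof of Lemma \ref{lemma1}, upgrading the square to a cube and keeping one more order in the expansion. I would again set
\[ f(x)=\frac{\left(\frac12+x\right)_n}{\left(\frac32-x\right)_n}, \]
and pass to the variable $u=2x-1$, so that $\frac12+x=1+\frac{u}{2}$ and $\frac32-x=1-\frac{u}{2}$, giving $f=\frac{(1+u/2)_n}{(1-u/2)_n}$. The structural observation driving the whole argument is that
\[ \ln f=\sum_{j=1}^{n}\ln\frac{1+u/(2j)}{1-u/(2j)} \]
is an \emph{odd} function of $u$; in particular it has no $u^2$ term, while its linear part recovers $\ln f=H_n u+\mathcal{O}(u^3)$ (equivalently $f'(1/2)=2H_n$, exactly as in Lemma \ref{lemma1}).

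Exponentiating, $f^3=\exp(3\ln f)=1+3H_n u+\tfrac92 H_n^2 u^2+\mathcal{O}(u^3)$, where the absence of a $u^2$ term in $\ln f$ is precisely what produces the clean coefficient $\tfrac92 H_n^2$. Substituting this into the series and summing term by term against $(-1)^n(2n+1)$, I would reduce the statement to evaluating three generating functions at $z=-1$. Writing the factor $(2n+1)$ as the action of the operator $2z\frac{d}{dz}+1$, I would use
\[ \sum_{n\ge0}(2n+1)z^n=\frac{1+z}{(1-z)^2},\qquad \sum_{n\ge0}H_n z^n=-\frac{\ln(1-z)}{1-z},\qquad \sum_{n\ge0}H_n^2 z^n=\frac{\ln^2(1-z)+\operatorname{Li}_2(z)}{1-z}. \]
The constant term of the expansion is $\frac{1+z}{(1-z)^2}\big|_{z=-1}=0$; the linear coefficient works out to $3\cdot(-\tfrac12)=-\tfrac32$ after applying $2z\frac{d}{dz}+1$ to the $H_n$ generating function and setting $z=-1$; and the quadratic coefficient is $\tfrac92$ times the value at $z=-1$ of the same operator applied to the $H_n^2$ generating function.

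The main obstacle — and the only genuinely new computation compared with Lemma \ref{lemma1} — is showing that this quadratic coefficient vanishes, which is what upgrades the error from $\mathcal{O}((2x-1)^2)$ to $\mathcal{O}((2x-1)^3)$. Here the dilogarithm enters through $\operatorname{Li}_2(-1)=-\pi^2/12$, and the point is that the contribution from the $2z\frac{d}{dz}$ piece and that from the undifferentiated piece cancel exactly at $z=-1$: the numerator produced by the derivative carries a factor $\frac{1+z}{z}$ that vanishes there, so the derivative term collapses to $2z\,\frac{N(-1)}{4}=-\frac{N(-1)}{2}$ with $N(-1)=\ln^2 2-\pi^2/12$, which precisely annihilates the undifferentiated term $\frac{N(-1)}{2}$. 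A secondary point needing care is that all three generating functions are evaluated at the boundary point $z=-1$, so I would justify the passage to $z=-1$ by Abel summation / analytic continuation, in keeping with the convention (noted in the Remark after Identity 1) that every series here is read through its analytic continuation; since the left-hand side is analytic in $u$ near $u=0$, the remaining Taylor coefficients are finite and the remainder is genuinely of order $(2x-1)^3$.
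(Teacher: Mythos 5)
Your proof is correct, and its skeleton coincides with the paper's: both expand the cube of the Pochhammer ratio in powers of $2x-1$ with coefficients $1$, $3H_n$, $\tfrac92 H_n^2$ against the weight $(2n+1)z^n$, and then evaluate the three resulting series at $z=-1$ in the Abel/analytic-continuation sense. The differences lie in how the two computational steps are carried out. For the expansion itself, the paper simply reports the output of a symbolic Maple computation (after replacing $\Psi(1+n)+\gamma$ by $H_n$), whereas you derive it from the observation that $\ln f$ is an odd function of $u=2x-1$; this is a genuine improvement, since it explains \emph{why} the order-$u^2$ coefficient is exactly $\tfrac92 H_n^2$ with no extra correction, rather than asking the reader to trust a computer algebra system. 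For the evaluation at $z=-1$, the paper uses the integral representation $H_n=\int_0^1\frac{1-(1-t)^n}{t}\,dt$ to reduce the harmonic-number sums to elementary integrals, whereas you invoke the closed-form generating functions of $H_n$ and $H_n^2$ (the latter involving $\operatorname{Li}_2$) together with the operator $2z\frac{d}{dz}+1$; your cancellation at $z=-1$ is correct — the factor $1+z$ in $N'(z)$ kills the derivative of the numerator, and the surviving contributions $-\tfrac12 N(-1)$ and $+\tfrac12 N(-1)$ with $N(-1)=\ln^2 2-\pi^2/12$ cancel, so the quadratic coefficient vanishes as required. Either route works; yours is more self-contained and verifiable by hand, the paper's avoids the dilogarithm. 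Your appeal to Abel summation for the boundary evaluation is consistent with the paper's stated convention (in the Remark after Identity 1) of reading all such series through their analytic continuations, which is also the level of rigor the paper itself applies when it "takes the limit as $z\to-1$".
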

\begin{proof}
Using symbolic calculations with Maple, and replacing $\Psi(1+n)+\gamma$ with $H_n=\sum_{j=1}^{n} \frac{1}{j}$, we see that
\begin{align}
\sum_{n=0}^{\infty} \left[ \frac{\left( \frac{1}{2}+x \right)_n}{\left( \frac{3}{2}-x \right)_n} \right]^3 (2n+1) z^n &= \sum_{n=0}^{\infty} (2n+1) z^n \nonumber + 3 \left(\sum_{n=0}^{\infty} (2n+1) H_n z^n \right)
(2x-1) \nonumber \\ &+ \frac{9}{2}  \left(\sum_{n=0}^{\infty} (2n+1) H_n^2 z^n \right) (2x-1)^2+\mathcal{O}(2x-1)^3. \nonumber
\end{align}\nonumber
Then, use the representation
\[ H_n=\int_0^1 \frac{1-(1-x)^n}{x}dx \]
and take the limit as $z \to -1$.
\end{proof}

\subsection*{Identity 2}

\begin{multline}\label{iden2}
\frac{1}{16} \sum_{n=0}^{\infty} \frac{1}{2^{6(n+x)}}
\frac{\left(\frac{1}{2}\right)_{n+x}^3}{(1)_{n+x}^3}[42(n+x)+5] \\
=\frac{1}{\pi} \frac{\cos 3 \pi x}{\cos^3 \pi x} + \frac{1}{64^x} \frac{\left(\frac{1}{2} \right)_x^3} {(1)_x^3}
\frac{64x^3}{(1-2x)^3} \sum_{n=0}^{\infty} \frac{\left( \frac{1}{2}+x \right)_n^3}{\left( \frac{3}{2}-x \right)_n^3} (-1)^n(2n+1).
\end{multline}
From (\ref{iden2}), we obtain the expansion
\begin{equation}
\frac{1}{16} \sum_{n=0}^{\infty} \frac{1}{2^{6(n+x)}}
\frac{\left(\frac{1}{2}\right)_{n+x}^3}{(1)_{n+x}^3}[42(n+x)+5]=\frac{1}{\pi}-3 \pi x^2 + 64 G x^3 + \mathcal{O}(x^4).
\end{equation}

\begin{proof}
Apply Thm. \ref{teor1}, with the condition (\ref{thin-contour}), to the WZ-pair
\begin{align}
F(n,k)&=B(n,k) \frac{-64n^3(2k+1)}{(2n-2k-1)^3}, \nonumber \\
G(n,k)&=B(n,k) \frac{(2n+1)^3(42n+5)+k(16k^3-96n^2k-96kn-24k)}{16(2n+1)^3}, \nonumber
\end{align}
where
\[
B(n,k)=\frac { \left( \frac12 - k \right)_n^3 \left( \frac12 + k \right)_n^3}
{(1)_n^3 \left( \frac12 \right)_n^3} (-1)^k \frac{1}{64^n}.
\]
We see that the function $s(x)=s_1(x)+s_2(x)$, where
\[
s_1(x)=\frac{1}{16} \sum_{n=0}^{\infty} \frac{1}{2^{6(n+x)}} \frac{\left(\frac{1}{2}\right)_{n+x}^3}{(1)_{n+x}^3}[42(n+x)+5],
\]
and
\[
s_2(x)=-\frac{1}{64^x} \frac{\left(\frac{1}{2} \right)_x^3} {(1)_x^3}
\frac{64x^3}{(1-2x)^3} \sum_{n=0}^{\infty} \frac{\left( \frac{1}{2}+x \right)_n^3}{\left( \frac{3}{2}-x \right)_n^3} (-1)^n(2n+1)
\]
has the property $s(x+1)=s(x)$.  Then we guess that $s(x)$ is equal to the simple function
\[ t(x)=\frac{1}{\pi} \frac{\cos 3 \pi x}{\cos^3 \pi x}. \]
To prove this result we define the function $h(x)=s(x)-t(x)$. Observe that $h(0)=0$ and $h(x+1)=h(x)$.
Then, use (\ref{asymp2}) to evaluate the limit of $h(x)$ when $x \to 1/2$. We see that it is finite. Hence $h(x)$ has no poles in the band $0 \leq {\rm Re}(x) <1$, and this implies that $h(x)$ is holomorphic. Then we follow the same steps as in our previous proof.
\end{proof}

To prove Identity $3$ we need the following lemma
\begin{lemma}
As $x \to 1/2$ we have
\begin{equation}\label{lemma3}
\sum_{n=0}^{\infty} \frac{\left( \frac12 + 2x \right)_n\left( \frac12 + x \right)_n^2}{\left( \frac{3}{2}-x \right)_n\left( \frac{1}{2} \right)_n^2} \frac{(-1)^n(2n+1+x)}{(2n+1)^2}=\frac{1}{2}-(2x-1)+\mathcal{O}(2x-1)^2.
\end{equation}
\end{lemma}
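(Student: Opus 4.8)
The plan is to mirror the proofs of Lemmas \ref{asymp1} and \ref{asymp2}: introduce a convergence parameter $z$, study the generating function $\Phi(x,z)=\sum_{n=0}^{\infty} b_n(x)\,z^n$ whose summand $b_n(x)$ is the term appearing in (\ref{lemma3}) with the factor $(-1)^n$ replaced by $z^n$, and recover the stated series as the limit $z\to -1$ interpreted through analytic continuation, exactly as in the Remark following Identity $1$. Working with $z$ keeps everything convergent for $|z|<1$ while the harmonic-type coefficients of the expansion in $2x-1$ are extracted.

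The first step I would take is to remove the awkward rational factor $\frac{2n+1+x}{(2n+1)^2}$. Writing $\frac{2n+1+x}{(2n+1)^2}=\frac{1}{2n+1}+\frac{x}{(2n+1)^2}$ and using $\frac{1}{2n+1}=\frac{(1/2)_n}{(3/2)_n}$ together with $\frac{1}{(2n+1)^2}=\frac{(1/2)_n^2}{(3/2)_n^2}$, the generating function splits as $\Phi(x,z)=\Phi_1(x,z)+x\,\Phi_2(x,z)$, where
\[
\Phi_1=\sum_{n=0}^{\infty}\frac{\left(\frac12+2x\right)_n\left(\frac12+x\right)_n^2}{\left(\frac32-x\right)_n\left(\frac12\right)_n\left(\frac32\right)_n}z^n,\qquad \Phi_2=\sum_{n=0}^{\infty}\frac{\left(\frac12+2x\right)_n\left(\frac12+x\right)_n^2}{\left(\frac32-x\right)_n\left(\frac32\right)_n^2}z^n.
\]
Now every summand is a genuine ratio of Pochhammer symbols, so both the closed-form evaluation and the continuation to $z=-1$ become transparent.

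Next I would expand in $\epsilon=2x-1$. At $x=1/2$ the Pochhammers collapse ($\frac12+2x\to\frac32$, $\frac12+x\to 1$, $\frac32-x\to 1$), and I would compute the linear coefficient in $\epsilon$ by means of the logarithmic-derivative formula $\frac{d}{da}(a)_n=(a)_n\sum_{j=0}^{n-1}\frac{1}{j+a}$ used in the proof of (\ref{asymp1}); the resulting coefficients are generating functions of $H_n$ together with the odd partial sums $\sum_{j}\frac{1}{2j+3}$ coming from $(\frac32+\epsilon)_n$. To evaluate these $z$-series and pass to $z=-1$ I would use the Euler-type integral representations $\frac{1}{(2n+1)\binom{2n}{n}}=\int_0^1 (t-t^2)^n\,dt$ and $H_n=\int_0^1\frac{1-(1-t)^n}{t}\,dt$ (the latter being the device already used in (\ref{asymp2})), which allow the geometric sum to be carried out under the integral sign before the limit is taken.

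The main obstacle will be the cancellation of transcendental constants. Already at zeroth order one computes $\Phi_1(1/2,-1)=\frac12-\frac{\ln(1+\sqrt2)}{2\sqrt2}$ and $\frac12\Phi_2(1/2,-1)=\frac{\ln(1+\sqrt2)}{2\sqrt2}$, so the $\ln(1+\sqrt2)$ cancels and the constant term is exactly $\frac12$, as required. An analogous but heavier cancellation of $\ln^2(1+\sqrt2)$ and related dilogarithmic pieces must take place at order $\epsilon$ in order to leave the clean rational coefficient $-1$. Arranging the bookkeeping so that these cancellations are manifest — or, as in Lemma \ref{asymp2}, delegating it to Maple — is the delicate point, together with justifying the finiteness of each $z\to-1$ limit through the integral representations above.
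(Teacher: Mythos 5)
Your plan coincides with the paper's own proof in every essential respect: the paper likewise passes to a $z$-generating function, has Maple expand the summand at $x=1/2$ into series whose coefficients involve $H_n$ and $H_{2n+1}$ weighted by $\frac{(1)_n}{(2n+1)\left(\frac12\right)_n}$, and then invokes exactly the two integral representations you cite ($H_n=\int_0^1\frac{1-(1-t)^n}{t}\,dt$ and $\frac{(1)_n}{(2n+1)\left(\frac12\right)_n}=\int_0^1 4^n t^n(1-t)^n\,dt$) before taking $z\to-1$. Your explicit check that the $\ln(1+\sqrt{2})$ terms cancel at order zero, leaving the constant $\frac12$, is correct and is a slightly more detailed account of the same computation the paper delegates to Maple.
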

\begin{proof}
Symbolic computations with Maple yield:
\begin{multline}\nonumber
\sum_{n=0}^{\infty} \frac{\left( \frac12 + 2x \right)_n\left( \frac12 + x \right)_n^2}{\left( \frac{3}{2}-x \right)_n\left( \frac{1}{2} \right)_n^2} \frac{(-1)^n(2n+1+x)}{(2n+1)^2}=
\frac12 \sum_{n=0}^{\infty} \frac{(1)_n}{\left( \frac12 \right)_n}\frac{4n+3}{2n+1} z^n \\ +
\frac12 \sum_{n=0}^{\infty} \frac{(1)_n}{\left( \frac12 \right)_n}\frac{1}{2n+1}z^n
\left[ \frac{}{} \! \! -(10+16n)+(3+4n)H_n+(12+16n)H_{2n+1}  \right].
\end{multline}
Now use the integral representations
\[
H_n=\int_0^1 \frac{1-(1-x)^n}{x}dx, \qquad  \frac{(1)_n}{(2n+1)\left(\frac12\right)_n}=\int_0^1 4^n x^n (1-x)^n dx,
\]
and take the limit as $z \to -1$.
\end{proof}

\subsection*{Identity 3}

\begin{multline}\label{iden3}
\frac{1}{8} \sum_{n=0}^{\infty} \frac{(-1)^n}{2^{2(n+x)}}
\frac{\left(\frac12 \right)_{n+x}\left(\frac14 \right)_{n+x}\left(\frac34 \right)_{n+x}}{(1)_{n+x}^3}[20(n+x)+3] \\
=\frac{1}{\pi} \frac{\cos 2 \pi x}{\cos \pi x} + \frac{1}{4^x} \frac{\left(\frac12 \right)_x\left(\frac14 \right)_x\left(\frac34 \right)_x} {(1)_x^3}
\frac{32x^3}{1-2x} \sum_{n=0}^{\infty} \frac{\left( \frac12 + 2x \right)_n\left( \frac12 + x \right)_n^2}{\left( \frac{3}{2}-x \right)_n\left( \frac{1}{2} \right)_n^2} \frac{(-1)^n(2n+1+x)}{(2n+1)^2}.
\end{multline}
From (\ref{iden3}), we obtain the expansion
\begin{equation}
\frac{1}{8} \sum_{n=0}^{\infty} \frac{(-1)^n}{2^{2(n+x)}}
\frac{\left(\frac12 \right)_{n+x}\left(\frac14 \right)_{n+x}\left(\frac34 \right)_{n+x}}{(1)_{n+x}^3}[20(n+x)+3]=
\frac{1}{\pi}-\frac{3}{2} \pi x^2 + 32 G x^3 + \mathcal{O}(x^4).
\end{equation}

\begin{proof}
We consider the WZ-pair
\begin{align}
F(n,k)&=B(n,k) \frac{-32n^3(n+2k+1)}{(2n-2k-1)(2k+1)^2}, \nonumber \\
G(n,k)&=B(n,k) \frac{(2n+1)^2(20n+3)-k(8n^2+32nk+8k^2+12k-2)}{8(2n+1)^2}, \nonumber
\end{align}
where
\[
B(n,k)=\frac { \left( \frac12 - k \right)_n \left( \frac12 + k \right)_n^2
\left( \frac14 + \frac{k}{2} \right)_n \left( \frac34 + \frac{k}{2} \right)_n}
{(1)_n^3 \left( \frac12 \right)_n^2} (-1)^k (-1)^n \frac{1}{4^n}.
\]
Apply Thm. \ref{teor2} with the condition (\ref{thin-contour}) to the WZ-pair. We see that the function $s(x)=s_1(x)+s_2(x)$, where
\[
s_1(x)=\frac{1}{8} \sum_{n=0}^{\infty} \frac{(-1)^n}{4^{n+x}}
\frac{\left(\frac12 \right)_{n+x}\left(\frac14\right)_{n+x}\left(\frac34 \right)_{n+x}}{(1)_{n+x}^3}[20(n+x)+3],
\]
and
\[
s_2(x)=- \frac{1}{4^x}  \frac{\left(\frac12 \right)_x\left(\frac14 \right)_x\left(\frac34 \right)_x} {(1)_x^3} \frac{32x^3}{1-2x}
\sum_{n=0}^{\infty} \frac{\left( \frac12 + 2x \right)_n\left( \frac12 + x \right)_n^2}{\left( \frac{3}{2}-x \right)_n\left( \frac{1}{2} \right)_n^2} \frac{(-1)^n(2n+1+x)}{(2n+1)^2}
\]
has the property $s(x+1)=-s(x)$. Then we guess that $s(x)$ is equal to
\[ t(x)=\frac{1}{\pi} \frac{\cos 2 \pi x}{\cos \pi x}. \]
To verify this observation, we define the function
\[ h(x)=\frac{s(x)-t(x)}{\cos \pi x} \]
and observe that $h(0)=0$ and $h(x+1)=h(x)$. We use (\ref{lemma3}), and the evaluation at $x=1/2$ from the first series in (\ref{iden3}) (see \cite[Sect. 2.4]{Gu-hyperiden}), to take the limit of $h(x)$ as $x \to 1/2$. The limit is finite, and this implies that $h(x)$ has no poles in the band $0 \leq {\rm Re}(x) < 1$. We conclude that it is holomorphic. When $|{\rm Im}(x)| \to \infty$ we have $|s_1(x)| \to 0$. On the other hand, numerical calculations clearly indicate that for $|{\rm Im}(x)|$ sufficiently large, we have $|s_2(x)|=\mathcal{O}(e^{\pi |{\rm Im}(x)|})$. Hence, we deduce that $|h(x)|=\mathcal{O}(1)$. We complete the proof as usual.
\end{proof}

We need the following lemma to prove identity $4$ :
\begin{lemma}
As $x \to 1/2$, we have the expansion
\begin{equation}\label{lemafinal}
\sum_{n=0}^{\infty} \frac{\left( \frac{1}{2} \right)_n^3}{\left( \frac{3}{2}-x \right)_n^3} (-1)^n(2n+1-x)=
\frac{1}{\pi}-\frac{3 \ln 2}{\pi}(2x-1)+\mathcal{O}(2x-1)^2.
\end{equation}
\end{lemma}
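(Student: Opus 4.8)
The plan is to mirror the proofs of Lemmas \ref{lemma1} and \ref{lemma2}: expand the summand to first order in $(2x-1)$ about $x=1/2$ and read off the two coefficients. First I would introduce an auxiliary variable $z$ and set
\[ \Phi(x,z)=\sum_{n=0}^{\infty}\frac{\left(\frac12\right)_n^3}{\left(\frac32-x\right)_n^3}(2n+1-x)\,z^n, \]
so that the left-hand side of (\ref{lemafinal}) is $\Phi(x,-1)$ in the sense of the analytic-continuation convention adopted in the paper (cf. the Remark after Identity $1$). Applying the derivative formula $\frac{d}{dx}(y)_n=(y)_n\sum_{j=0}^{n-1}(j+y)^{-1}$ to $\left(\frac32-x\right)_n$ yields $\frac{d}{dx}\log\left(\frac32-x\right)_n\big|_{x=1/2}=-H_n$, so that
\[ \frac{\left(\frac12\right)_n^3}{\left(\frac32-x\right)_n^3}=\frac{\left(\frac12\right)_n^3}{(1)_n^3}\Big(1+\tfrac32 H_n(2x-1)+\mathcal{O}(2x-1)^2\Big), \]
while $2n+1-x=\left(2n+\tfrac12\right)-\tfrac12(2x-1)$.

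Multiplying these two expansions and collecting powers of $(2x-1)$, the coefficient of $(2x-1)^0$ in the summand is $\frac{\left(\frac12\right)_n^3}{(1)_n^3}\left(2n+\tfrac12\right)$ and the coefficient of $(2x-1)^1$ is $\frac{\left(\frac12\right)_n^3}{(1)_n^3}\left(\tfrac34 H_n(4n+1)-\tfrac12\right)$. Summing the zeroth-order part at $z=-1$ and invoking the classical Ramanujan series $\sum_{n\ge0}\frac{\left(\frac12\right)_n^3}{(1)_n^3}(-1)^n(4n+1)=\tfrac2\pi$ produces exactly $\tfrac1\pi$, the claimed leading term. It then remains to prove that
\[ C:=\sum_{n=0}^{\infty}\frac{\left(\frac12\right)_n^3}{(1)_n^3}(-1)^n\Big(\tfrac34 H_n(4n+1)-\tfrac12\Big)=-\frac{3\ln2}{\pi}. \]

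To evaluate $C$ I would use the same device as in the earlier lemmas: insert $H_n=\int_0^1\frac{1-(1-u)^n}{u}\,du$, interchange summation and integration, and express the inner sums through the generating function $f(w)=\sum_{n\ge0}\frac{\left(\frac12\right)_n^3}{(1)_n^3}w^n$, which by Clausen's formula equals ${}_2\mathrm{F}_1\!\left(\tfrac14,\tfrac14;1;w\right)^2$, together with $g(w)=f(w)+4wf'(w)=\sum_{n\ge0}\frac{\left(\frac12\right)_n^3}{(1)_n^3}(4n+1)w^n$. Since $\sum_n\frac{\left(\frac12\right)_n^3}{(1)_n^3}(4n+1)(-1)^n(1-u)^n=g(u-1)$ and $g(-1)=\tfrac2\pi$, the harmonic part of $C$ collapses to the one-dimensional integral $\tfrac34\int_0^1\frac{\frac2\pi-g(u-1)}{u}\,du$, which together with the $-\tfrac12 f(-1)$ term should evaluate to $-\frac{3\ln2}{\pi}$. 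I would close by passing to the limit $z\to-1$ exactly as at the end of Lemmas \ref{lemma1} and \ref{lemma2}.

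The hard part will be this final evaluation. The series defining $C$ is only conditionally (Abel/analytically) summable, because $\frac{\left(\frac12\right)_n^3}{(1)_n^3}(4n+1)\sim 4\pi^{-3/2}n^{-1/2}$, so the interchange of sum and integral and the passage $z\to-1$ must be justified through the regularization used throughout the paper; and, more substantively, the resulting integral must be shown to produce exactly the constant $-\frac{3\ln2}{\pi}$. Producing this closed form is the real content: I expect it to require either the explicit ${}_2\mathrm{F}_1$-derivative identities above or, alternatively, a telescoping of the $H_n(4n+1)$ sum coming directly from the WZ certificate underlying the series $\tfrac2\pi$, which would bypass the integral and exhibit the $\ln2$ boundary term more transparently.
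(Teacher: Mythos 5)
Your Taylor expansion of the summand about $x=1/2$ is exactly the paper's: the zeroth-order term is $\frac12\sum(-1)^n\frac{(1/2)_n^3}{(1)_n^3}(4n+1)=\frac1\pi$ by Bauer's series, and the linear coefficient is $\frac14\sum(-1)^n\frac{(1/2)_n^3}{(1)_n^3}\bigl[-2+3(4n+1)H_n\bigr]$, which agrees with your $\frac34H_n(4n+1)-\frac12$. The gap is that you never actually evaluate this constant: you reduce it to $\frac34\int_0^1\frac{2/\pi-g(u-1)}{u}\,du-\frac12 f(-1)$ and say you ``expect'' it to equal $-\frac{3\ln2}{\pi}$, explicitly flagging the evaluation as the remaining hard part. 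That evaluation \emph{is} the content of the lemma, and your proposed route is genuinely problematic: unlike Lemmas \ref{lemma1} and \ref{lemma2}, where the generating functions $\sum z^n$, $\sum H_nz^n$, $\sum(2n+1)H_nz^n$ are elementary, here $g(w)=\sum\frac{(1/2)_n^3}{(1)_n^3}(4n+1)w^n$ and $f(-1)={}_3\mathrm{F}_2(\frac12,\frac12,\frac12;1,1;-1)$ have no elementary closed forms, so the integral does not collapse the way it does in the earlier lemmas, and it is not clear how the $\ln 2$ would emerge.

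The paper takes a different and much shorter route to the constant. It writes Chu's formula (\ref{forchu}),
\[
\frac{1}{2}\sum_{n=0}^{\infty}(-1)^n\frac{\left(\frac12+x\right)_n^3}{(1)_n^3}(1+2x+4n)=\frac{\cos\pi x}{\pi},
\]
subtracts the thesis formula (\ref{forGuthesis}), and compares coefficients of $x^1$. In the difference of the left-hand sides the $\left(\frac12+x\right)_n$ factors cancel, leaving exactly the combination $-2+3(4n+1)H_n$ (the $3H_n$ coming from differentiating $(1+x)_n^{-3}$, the $-2$ from $2x-4x$); on the right-hand side the linear term comes from $\frac{(1)_x^3}{(1/2)_x^3}=1+6x\ln 2+\mathcal{O}(x^2)$, i.e.\ from $\psi(1)-\psi(\frac12)=2\ln2$. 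This yields (\ref{harmoniciden}) and hence the coefficient $-\frac{3\ln2}{\pi}$ at once. If you want to salvage your approach, your alternative suggestion (a WZ telescoping of the $H_n(4n+1)$ sum) is closer in spirit to what would be needed, but as written the proof is incomplete at its decisive step.
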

\begin{proof}
First write the formula \cite[Cor. 2.4]{chu} in the form
\begin{equation}\label{forchu}
\frac{1}{2} \sum_{n=0}^{\infty} (-1)^n \frac{\left( \frac12 +x \right)_n^3}{(1)_n^3}(1 + 2x + 4n)=\frac{\cos \pi x}{\pi},
\end{equation}
and then subtracting the formula \cite[Sect. 2.2]{Gu-thesis}
\begin{equation}\label{forGuthesis}
\frac12 \sum_{n=0}^{\infty} (-1)^n \frac{\left( \frac12 +x \right)_n^3}{(1+x)_n^3} (1 + 4x + 4n)= \frac{(1)_x^3}{\left(\frac12 \right)_x^3}\left( \frac{1}{\pi} -\frac{\pi^2}{2}x^2 \right) +\mathcal{O}(x^3),
\end{equation}
and then expand everything in powers of $x$, to see that
\begin{equation}\label{harmoniciden}
\sum_{n=0}^{\infty} (-1)^n \frac{\left( \frac12 \right)_n^3}{(1)_n^3}[-2+3(4n+1)H_n]=-\frac{12 \ln 2}{\pi}.
\end{equation}
When $x \to 1/2$ we obtain the expansion
\begin{multline}\nonumber
\sum_{n=0}^{\infty} (-1)^n \frac{\left( \frac{1}{2} \right)_n^3}{\left( \frac32 - x \right)_n^3}(2n+1-x) =
\frac12 \sum_{n=0}^{\infty} (-1)^n \frac{\left( \frac12 \right)_n^3}{(1)_n^3} (4n+1) \nonumber \\ +
\left( \frac14 \sum_{n=0}^{\infty} (-1)^n \frac{\left( \frac12 \right)_n^3}{(1)_n^3}[-2+3(4n+1)H_n] \right) (2x-1)+\mathcal{O}(2x-1)^2. \nonumber
\end{multline}
Finally use (\ref{forGuthesis}) and (\ref{harmoniciden}) when $x=0$.
\end{proof}

\subsection*{Identity 4}

\begin{multline}\label{iden4}
\frac{1}{2} \sum_{n=0}^{\infty} (-1)^n
\frac{\left(\frac{1}{2}\right)_{n+x}^3}{(1)_{n+x}^3}[4(n+x)+1] \\
=\frac{1}{\pi} \frac{\cos 2 \pi x}{\cos^3 \pi x} +  \frac{\left(\frac{1}{2} \right)_x^3} {(1)_x^3}
\frac{8x^3}{(1-2x)^3} \sum_{n=0}^{\infty} \frac{\left( \frac{1}{2} \right)_n^3}{\left( \frac{3}{2}-x \right)_n^3} (-1)^n(2n+1-x).
\end{multline}
From (\ref{iden4}), we obtain the expansion
\begin{equation}
\frac{1}{2} \sum_{n=0}^{\infty} (-1)^n \frac{\left(\frac{1}{2}\right)_{n+x}^3}{(1)_{n+x}^3}[4(n+x)+1]=\frac{1}{\pi}-\frac{1}{2} \pi x^2 + 8 G x^3 + \mathcal{O}(x^4).
\end{equation}
\begin{proof}
Consider the following WZ-pair:
\begin{align}
F(n,k)&=\frac {\left( \frac12 - k \right)_n^3}
{(1)_n^3} (-1)^k (-1)^n \frac{16n^3(n-2k-1)}{(2n-2k-1)^3}, \nonumber \\
G(n,k)&=\frac {\left( \frac12 - k \right)_n^3}
{(1)_n^3} (-1)^k (-1)^n (4n-2k+1). \nonumber
\end{align}
Observe that the condition (\ref{thin-contour}) in this case is equivalent to saying that
\[
\int_{\mathcal{C}} \widetilde{G}(t,s)ds=8t^3 \frac{\left( \frac12 \right)_t^3}{(1)_t^3}
\int_\mathcal{C} \frac{\left( \frac12 \right)_s^3}{\left( \frac12 -t  \right)_s^3}
(-2s+4t-1) \Gamma(s+1) \Gamma(-s)ds=0,
\]
holds. The formula holds because the only pole inside $\mathcal{C}$ is at $s=0$ and its residue is zero. Then, applying Thm. \ref{teor2} to the WZ-pair, we see that the function $s(x)=s_1(x)+s_2(x)$, where
\[
s_1(x)=\frac{1}{2} \sum_{n=0}^{\infty} (-1)^n
\frac{\left(\frac{1}{2}\right)_{n+x}^3}{(1)_{n+x}^3}[4(n+x)+1],
\]
and
\[
s_2(x)=-\frac{\left( \frac12 \right)_x^3}{(1)_x^3} \frac{8x^3}{(1-2x)^3} \sum_{n=0}^{\infty} \frac{\left( \frac{1}{2} \right)_n^3}{\left( \frac{3}{2}-x \right)_n^3} (-1)^n(2n+1-x),
\]
has the property $s(x+1)=-s(x)$. Then we guess that $s(x)$ is equal to the simple function
\[ t(x)=\frac{1}{\pi}\frac{\cos 2 \pi x}{\cos^3 \pi x}. \]
To prove it, define the function
\[ h(x)=\cos \pi x (s(x)-t(x)). \]
We know that $h(0)=0$ and that $h(x+1)=h(x)$. We use (\ref{lemafinal}) to calculate $h(x)$ when $x \to 1/2$, and we see that it is finite. Hence $h(x)$ has no poles in the band $0 \leq {\rm Re}(x) <1$ and, because $h(x)$ has period $1$, the function is holomorphic.
We easily deduce that $|s_2(x)|$ tends to $0$ as $|{\rm Im}(x)| \to \infty$. By numerical calculations (we do not have a rigorous proof) $|s_1(x)| \to 0$ when $|{\rm Im}(x)| \to \infty$. Hence $|h(x)|=\mathcal{O}(e^{\pi |{\rm Im}(x)|})$, and in the usual way, we deduce that $h(x)=0$ for all complex $x$.
\end{proof}

\section{Identities related to Ramanujan-like series for $1/\pi^2$}\label{section-pi2}

In this section we give incomplete proofs of some new identities related to Ramanujan-like series for $1/\pi^2$. These identities imply expansions in powers of $x$ which agree with the general kind of expansion conjectured in \cite{Gu-matrix} up to order $x^4$. See \cite{AlGu} to understand the relation of these expansions with the theory of Calabi-Yau differential equations. In addition, the hypergeometric identities we are going to prove, allows us to obtain the coefficient of $x^5$.

\subsection{A new WZ-pair and a new identity}

In \cite[Sect. 3]{Gu-hyperiden} we proved two hypergeometric identities related to the series
\[
 \sum_{n=0}^{\infty} \frac{\left( \frac12 \right)_n^3 \left( \frac14 \right)_n \left( \frac34 \right)_n}{
(1)_n^5}(120n^2+34n+3) \frac{1}{2^{4n}}=\frac{32}{\pi^2}.
\]
Here we prove another one which is stronger than those in \cite{Gu-hyperiden}, in the sense of Section \ref{introduc}. The proof begins with the observation that Zeilberger's Maple package \textit{EKHAD} \cite[Appendix]{Pe} certifies that the function
\[ F(n,k)=B(n,k)  \frac{-n^5(2k+1)}{(1+2k-2n)^4}, \]
where
\[
B(n,k)=\frac{ \left( \frac12 - k \right)_n^4 \left( \frac12 + k \right)_n^4
\left( \frac14 \right)_n \left( \frac34 \right)_n}
{(1)_n^5 \left( \frac12 \right)_n^5} \frac{1}{16^n},
\]
has a companion $G(n,k)$ such that $(F,G)$ is a WZ-pair. Applying Thm. \ref{teor1}, we obtain the identity
\begin{multline}\label{idenpi2}
\frac{1}{32} \sum_{n=0}^{\infty} \frac{1}{2^{4(n+x)}}
\frac{\left(\frac12\right)_{n+x}^3\left(\frac14\right)_{n+x}\left(\frac34\right)_{n+x}}{(1)_{n+x}^5}
[120(n+x)^2+34(n+x)+3] \\
=\frac{1}{\pi^2} \frac{8 \cos^4\pi x-12 \cos^2\pi x+5}{2\cos^4\pi x - \cos^2\pi x} - \frac{1}{16^x}
\frac{\left(\frac12 \right)_x^3\left(\frac14 \right)_x\left(\frac34 \right)_x} {(1)_x^5}
\frac{256x^5}{(1-2x)^4} \sum_{n=0}^{\infty} \frac{\left(\frac12+x \right)_n^4}{\left(\frac32-x \right)_n^4}(2n+1) \\
=\frac{1}{\pi^2}-x^2+\frac{10}{3}\pi^2x^4-224 \zeta(3) x^5 + \mathcal{O}(x^6).
\end{multline}

\subsection{Other new identities}

In \cite{Gu-wznew} used the WZ-method to prove the following series for $1/\pi^2$:
\[
 \sum_{n=0}^{\infty} \frac{\left( \frac12 \right)_n^3 \left( \frac13 \right)_n \left( \frac23 \right)_n}{
(1)_n^5}(74n^2+27n+3) \left( \frac{3}{4} \right)^{3n}=\frac{48}{\pi^2}.
\]
A different WZ-proof gives
\begin{multline}\nonumber
\sum_{n=0}^{\infty} \frac{ \left( \frac12 \right)_n^3 \left( \frac13 + \frac{k}{3} \right)_n \left( \frac23 + \frac{k}{3} \right)_n  \left( 1 + \frac{k}{3} \right)_n}{(1)_n^3 (1+k)_n^3} \frac{\left( \frac12 \right)_k^2}{(1)_k^2} \left( \frac{3}{4} \right)^{3n} \\
\frac{n(74n^2+27n+3)+k(108n^2+42nk+24n+5k+1)}{n+\frac{k}{3}}=\frac{48}{\pi^2}.
\end{multline}
Since $F(n,k)=B(n,k)\cdot 384n^3/(3n+k)$, we observe that the power of $n$ is not $5$ and the hypergeometric identity which we can derive using Th. \ref{teor1} is not suitable to easily get an expansion up to order $5$. However we have also found the following new WZ-pair:
\[ F(n,k)=B(n,k)  \frac{2^{12} n^5}{(2n-2k-1)^3}, \]
and
\[ G(n,k)=B(n,k) \left( 74n^2+27n+3+48k^4 \frac{3n+1}{(2n+1)^3}-24k^2 \frac{5n+1}{2n+1} \right) \]
where
\[
B(n,k)=\frac{ \left( \frac12 - k \right)_n^3 \left( \frac12 + k \right)_n^3 \left( \frac13 \right)_n \left( \frac23 \right)_n}
{(1)_n^5 \left( \frac12 \right)_n^3} \left( \frac{3}{4} \right)^{3n}.
\]
By the WZ-method we can prove that the sum of the series
\[
\sum_{n=0}^{\infty} \frac{ \left( \frac12 - k \right)_n^3 \left( \frac12 + k \right)_n^3 \left( \frac13 \right)_n \left( \frac23 \right)_n} {(1)_n^5 \left( \frac12 \right)_n^3} \! \left( \frac{3}{4} \right)^{3n} \! \! \left( 74n^2+27n+3+48k^4 \frac{3n+1}{(2n+1)^3}-24k^2 \frac{5n+1}{2n+1} \right)
\]
is equal to $(\cos^2 \pi k)/\pi^2$. Even more interestingly, applying Th. \ref{teor1} we obtain the hypergeometric identity
\begin{multline}\label{idenpi2}
\frac{1}{48} \sum_{n=0}^{\infty} \left( \frac{3}{4} \right)^{3(n+x)} \frac{\left(\frac12\right)_{n+x}^3 \left(\frac13\right)_{n+x}\left(\frac23\right)_{n+x}}{(1)_{n+x}^5} [74(n+x)^2+27(n+x)+3] \\
=\frac{1}{\pi^2} \frac{8 \cos^4\pi x-8 \cos^2\pi x+3}{4\cos^4\pi x - \cos^2\pi x} + \left( \frac{3}{4} \right)^{3x}
\frac{\left(\frac12 \right)_x^3\left(\frac13 \right)_x\left(\frac23 \right)_x} {(1)_x^5}
\frac{128x^5}{3(2x-1)^3} \sum_{n=0}^{\infty} \frac{\left(\frac12+x \right)_n^3}{\left(\frac32-x \right)_n^3} \\
=\frac{1}{\pi^2}-\frac{1}{3}x^2+\frac{2}{3}\pi^2x^4-\frac{112}{3} \zeta(3) x^5 + \mathcal{O}(x^6).
\end{multline}
Finally, we explain how we guessed the function
\begin{equation}\label{funt}
t(x)=\frac{1}{\pi^2} \frac{8 \cos^4\pi x-8 \cos^2\pi x+3}{4\cos^4\pi x - \cos^2\pi x}.
\end{equation}
First, define the function
\begin{multline}\nonumber
h(x)=\frac{1}{48} \sum_{n=0}^{\infty} \left( \frac{3}{4} \right)^{3(n+x)} \frac{\left(\frac12\right)_{n+x}^3 \left(\frac13\right)_{n+x}\left(\frac23\right)_{n+x}}{(1)_{n+x}^5} [74(n+x)^2+27(n+x)+3] \\ - \left( \frac{3}{4} \right)^{3x} \frac{\left(\frac12 \right)_x^3\left(\frac13 \right)_x\left(\frac23 \right)_x} {(1)_x^5}
\frac{128x^5}{3(2x-1)^3} \sum_{n=0}^{\infty} \frac{\left(\frac12+x \right)_n^3}{\left(\frac32-x \right)_n^3}.
\end{multline}
If we look carefully at the poles of this function, it is possible to guess that the denominator of $t(x)$ is equal to $\cos^2 \pi x (4\cos^2\pi x - 1)$. Thus consider
\[ y(x)=\pi^2 h(x)(4\cos^4\pi x - \cos^2\pi x) \]
and suppose that $y(x)$ is of the form
\[ y(x)=\alpha_1+\alpha_2 \cos^2 \pi x + \alpha_3 \cos^4 \pi x. \]
Making the substitution $x=\pi^{-1} \arccos q$ and evaluating $y(x)$ at several rational values of $q$, gives a simple linear system of equations involving only rational numbers, from which we can determine $\alpha_1$, $\alpha_2$, $\alpha_3$.

\subsection*{Acknowledgment}

To the anonymous referee for his valuable comments and criticism, to Mathew Rogers for reading the revision of the paper improving substantially the style of the English language, and to Arne Meurman for his comments concerning Th. \ref{Fou}.

\end{document}